\documentclass[11pt, a4paper]{article}

\usepackage[utf8]{inputenc}
\usepackage{amsmath, amsthm, amssymb, amsfonts}
\usepackage{geometry}
\usepackage{bm}
\usepackage{algorithm}
\usepackage{algorithmic}
\usepackage{cases}
\usepackage{graphicx}
\usepackage{subcaption}
\usepackage{hyperref}
\usepackage{cleveref}
\usepackage{color}
\usepackage{authblk}

\newtheorem{theorem}{Theorem}[section]
\newtheorem{lemma}[theorem]{Lemma}

\newtheorem{corollary}[theorem]{Corollary}

\newcommand{\vx}{{\bm x}}

\title{The Continuous Relaxation of Sparse PCA is NP-hard}

\author[1]{Linbin Li}
\author[1]{Yong Xia\thanks{Corresponding author. Email: yxia@buaa.edu.cn}}
\affil[1]{LMIB of the Ministry of Education, School of Mathematical Sciences, Beihang University, Beijing, China}

\begin{document}

\maketitle
\begin{abstract}
Maximizing a symmetric quadratic form under simultaneous $L_1$ norm inequality and $L_2$ norm equality constraints is a standard and widely used continuous relaxation for Sparse Principal Component Analysis (SPCA). This paper settles the computational complexity of this continuous formulation by proving it is NP-hard. Furthermore, the variant with both $L_1$ and $L_2$ norm inequalities is also shown to be NP-hard.
\end{abstract}

\vspace{1em}
\noindent \textbf{Keywords:} Computational Complexity, Sparse Principal Component Analysis, Quadratic Optimization.

\section{Introduction}
\label{sec:intro}

Consider the problem of maximizing a symmetric quadratic form subject to simultaneous $L_1$ norm inequality and $L_2$ norm equality constraints:
\begin{equation}
\label{eq:l1l2_intro}
    (L_1\text{-}L_2\text{-QM})\ \max_{\vx \in \mathbb{R}^n} \vx^T \bm{A} \vx \quad \text{s.t.} \quad \|\vx\|_1 \le 1, \;\; \|\vx\|_2 = R,
\end{equation}
where $\bm{A} \in \mathbb{R}^{n \times n}$ is a symmetric matrix and $R>0$ is a given parameter. The problem ($L_1$-$L_2$-QM) was first proposed by Jolliffe et al. \cite{jolliffe2003modified}. Since then, it has found broad applications in modern data science, particularly in dimensionality reduction and the analysis of high-dimensional gene expression data  \cite{journee2010generalized,witten2009penalized}.

The problem ($L_1$-$L_2$-QM) originates from the fundamental problem of finding a sparse principal direction, traditionally formulated with the combinatorial $L_0$ pseudo-norm:
\begin{equation}
    (\text{SPCA}) \ \max_{\vx \in \mathbb{R}^n} \vx^T \bm{A} \vx \quad \text{s.t.} \quad \|\vx\|_0 \le k, \;\; \|\vx\|_2 = 1,
\end{equation}
where $\|\vx\|_0$ counts the number of non-zero elements in $\vx$ and $k$ is an integer. Due to the combinatorial nature, the problem (SPCA) is well known to be NP-hard \cite{d2004direct}.

To circumvent the combinatorial intractability of (SPCA), its continuous relaxation is widely employed. The relaxation from the $L_0$ to the $L_1$ norm is mathematically motivated by the Cauchy-Schwarz inequality:
\begin{equation}
\label{CS}
    \|\vx\|_1  \le \|\vx\|_2  \cdot \sqrt{\|\vx\|_0} \le \sqrt{k}.
\end{equation}
After suitable normalization, we recover ($L_1$-$L_2$-QM) with $R=1/\sqrt{k}$. 

A natural variant of ($L_1$-$L_2$-QM) relaxes the $L_2$ norm equality to an inequality:
\begin{equation}
\label{problem:inequality}
    (L_1\text{-}L_2\text{-QM-Ineq}) \  \max_{\vx \in \mathbb{R}^n} \vx^T \bm{A} \vx \quad \text{s.t.} \quad \|\vx\|_1 \le 1, \;\; \|\vx\|_2 \le R,
\end{equation}
where the feasible region defined by simultaneous $L_1$ and $L_2$ norm inequalities is widely recognized as the constraint space of the Elastic Net \cite{argyriou2012sparse}.

When $R \ge 1$, the $L_2$ constraint becomes redundant due to  $\|\vx\|_2 \le \|\vx\|_1\le 1$. Problem ($L_1$-$L_2$-QM-Ineq) then reduces to the standard $L_1$-constrained quadratic optimization \cite{bomze2007improved,nesterov1998global,pinar2006semidefinite,luss2011convex}:
\begin{equation}
    (\text{QPL1})\ \max_{\bm{x} \in \mathbb{R}^n} \bm{x}^T \bm{A} \bm{x} \quad \text{s.t.} \quad \|\bm{x}\|_1 \le 1.
\end{equation}
Hsia \cite{hsia2014complexity} proved that (QPL1) is NP-hard. Consequently, the problem ($L_1$-$L_2$-QM-Ineq) is NP-hard for $R \ge 1$. However, its complexity when the $L_2$ constraint is active (i.e., $R < 1$) had remained open.

Despite the wide applicability of these formulations, solving them is computationally challenging. For ($L_1$-$L_2$-QM), the combination of an $L_1$ inequality and an $L_2$ equality restricts the feasible region to disconnected components on the unit sphere, creating severe non-convexity and multiple local optima that trap numerical algorithms \cite{trendafilov2006projected}. For the inequality variant ($L_1$-$L_2$-QM-Ineq), although its feasible region is convex, the objective function can be indefinite, which prevents straightforward convex optimization techniques from guaranteeing global optimality.

To the best of our knowledge, the exact computational complexity of ($L_1$-$L_2$-QM) has remained unaddressed in the literature. Does the introduction of the $L_2$ norm equality (or inequality) constraint fundamentally alter the hardness of the continuous relaxation? In this paper, we fill this gap by proving that both are NP-hard.

\section{Main results}
\label{sec:l1l2_intersection}

We establish the computational complexity of the ($L_1$-$L_2$-QM) and ($L_1$-$L_2$-QM-Ineq) problems. To this end, we first recall a classical result from graph theory and present a structural lemma concerning the allowable parameter regime, which together lay the foundation for our main theorem. Throughout the article, let bold lowercase letters (e.g., $\vx$) denote vectors and italic lowercase letters (e.g., $x_i$) denote their components. Let $\bm{A}$ be a matrix with entries $A_{ij}$.

\begin{lemma}[Motzkin-Straus Theorem \cite{motzkin1965maxima}]
\label{theorem:Motzkin-Straus}
    Consider an undirected graph $G=(V,E)$ with $n$ vertices and adjacency matrix $\bm{A}$, where $\bm{A}_{ij}=1$ if $(i,j)\in E$, and $\bm{A}_{ij}=0$ otherwise. Then
    \begin{equation*}
        \max_{\vx \in \Delta} \vx^T \bm{A} \vx = 1 - \frac{1}{\omega(G)},
    \end{equation*}
    where $\Delta = \{\vx \in \mathbb{R}^n \mid \sum_{i=1}^n x_i = 1, x_i \ge 0~\forall i\}$ is the standard simplex and $\omega(G)$ is the order of the maximal complete graph contained in $G$.
\end{lemma}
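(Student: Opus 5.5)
Since this is the classical Motzkin--Straus theorem, I will prove it directly: show that $1-1/\omega(G)$ is attained on $\Delta$, and that nothing larger is possible. Write $f(\vx)=\vx^T\bm{A}\vx=2\sum_{(i,j)\in E} x_ix_j$, where the sum runs over unordered edges. Set $\omega=\omega(G)$.

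\emph{Lower bound.} Let $C$ be a maximum clique with $|C|=\omega$, and take $x_i=1/\omega$ for $i\in C$ and $x_i=0$ otherwise. Then $\vx\in\Delta$ and
\[
f(\vx)=\sum_{i\neq j\in C}\frac{1}{\omega^2}=\frac{\omega(\omega-1)}{\omega^2}=1-\frac{1}{\omega}.
\]

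\emph{Upper bound.} Since $\Delta$ is compact, $f$ attains its maximum. Among all maximizers, choose $\vx^*$ whose support $S=\{i: x^*_i>0\}$ has minimal cardinality. I claim $S$ is a clique. Suppose instead that $i,j\in S$ with $i\neq j$ and $(i,j)\notin E$, so $A_{ij}=0$. Move mass along $e_i-e_j$: for $\vx(t)=\vx^*+t(e_i-e_j)$,
\[
f(\vx(t))=f(\vx^*)+2t\bigl((\bm{A}\vx^*)_i-(\bm{A}\vx^*)_j\bigr)+t^2\bigl(A_{ii}+A_{jj}-2A_{ij}\bigr).
\]
Here $A_{ii}=A_{jj}=0$ and $A_{ij}=0$, so the quadratic coefficient vanishes and $f$ is affine in $t$. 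Because $\vx^*$ is a maximizer and $t$ can move in both directions from $0$ while staying in $\Delta$, the linear coefficient must be zero. Hence $f$ is constant along this segment. Pushing $t$ until $x_i$ or $x_j$ reaches zero gives another maximizer with strictly smaller support, contradicting minimality. So $S$ is a clique, and $|S|=k\le\omega$.

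On the clique $S$,
\[
f(\vx^*)=\Bigl(\sum_{i\in S}x_i^*\Bigr)^2-\sum_{i\in S}(x_i^*)^2=1-\|\vx^*\|_2^2\le 1-\frac{1}{k}\le 1-\frac{1}{\omega},
\]
where the first inequality is Cauchy--Schwarz on the $k$ entries summing to $1$. Combined with the lower bound, this gives equality.

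The main obstacle is the support-reduction step. It relies on the zero diagonal and the missing edge together making $f$ affine along $e_i-e_j$, and on $\vx^*$ lying in the relative interior of the face spanned by $S$ so that first-order optimality forces the slope to be zero. Both facts hold here: the adjacency matrix has zero diagonal, and every coordinate in $S$ is strictly positive, so small perturbations in either direction stay feasible.
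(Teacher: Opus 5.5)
The paper gives no proof of this lemma. It quotes the result from Motzkin and Straus, so there is no in-paper argument to compare against.

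Your proof is correct and complete. Each step checks out:
\begin{itemize}
\item The uniform vector on a maximum clique attains $1-1/\omega(G)$.
\item For a minimal-support maximizer, a non-adjacent pair $i,j\in S$ would make $f$ affine along $e_i-e_j$. The slope must vanish because $x^*_i,x^*_j>0$ allow motion in both directions, and sliding to an endpoint strictly shrinks the support, a contradiction.
\item On the resulting clique $S$, Cauchy--Schwarz gives $1-\|\vx^*\|_2^2\le 1-1/|S|\le 1-1/\omega(G)$.
\end{itemize}

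This is essentially the classical weight-shifting proof. The original is usually organized as an induction on $n$: a maximizer with a zero coordinate passes to an induced subgraph, and an interior maximizer of a non-complete graph is pushed to the boundary by exactly your shift. Your minimal-support choice packages the same idea without the induction.

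You were right to stress that the zero diagonal is essential. The paper's statement leaves this implicit and only says ``loop-less'' later, in the proof of the main theorem. With loops the identity fails: a single looped vertex gives maximum $1$ on $\Delta$, whereas $1-1/\omega(G)=0$.
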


To isolate the genuinely hard instances of ($L_1$-$L_2$-QM), we first identify the parameter regimes for which the problem is either infeasible or solvable in polynomial time.

\begin{lemma}
\label{lem:range_R}
    The 
    problem ($L_1$-$L_2$-QM) is either infeasible or solvable in polynomial time  unless the parameter  $R$ satisfies
     $1/\sqrt{n} < R < 1$.
\end{lemma}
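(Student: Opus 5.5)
The plan is a case analysis over the complement of the interval $1/\sqrt{n} < R < 1$. The only tools needed are the two elementary norm comparisons
\begin{equation*}
\|\vx\|_2 \le \|\vx\|_1 \le \sqrt{n}\,\|\vx\|_2 \qquad (\vx \in \mathbb{R}^n).
\end{equation*}
The first holds since $\|\vx\|_1^2 = \sum_i x_i^2 + \sum_{i \ne j} |x_i||x_j|$. The second is the Cauchy--Schwarz inequality. The complement splits into three regimes: $R > 1$, $R = 1$, and $0 < R \le 1/\sqrt{n}$. Note that this last regime includes the boundary value $R = 1/\sqrt{n}$.

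\emph{Case $R > 1$.} Any feasible $\vx$ would satisfy $R = \|\vx\|_2 \le \|\vx\|_1 \le 1$, which is a contradiction. Hence ($L_1$-$L_2$-QM) is infeasible.

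\emph{Case $R = 1$.} Feasibility forces $1 = \|\vx\|_2 \le \|\vx\|_1 \le 1$, so $\|\vx\|_1 = \|\vx\|_2$. By the expansion above, this means $\sum_{i\ne j}|x_i||x_j| = 0$, so $\vx$ has at most one nonzero entry. Hence the feasible set is exactly $\{\pm \bm e_i : i = 1,\dots,n\}$. The optimal value is $\max_i A_{ii}$, which is computed in $O(n)$ time.

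\emph{Case $0 < R \le 1/\sqrt{n}$.} Every $\vx$ with $\|\vx\|_2 = R$ satisfies $\|\vx\|_1 \le \sqrt{n}R \le 1$. The non-strict inequality is what covers the boundary point $R = 1/\sqrt{n}$. So the $L_1$ constraint is redundant, and the problem becomes $\max\{\vx^T\bm{A}\vx : \|\vx\|_2 = R\}$. Its value is $R^2 \lambda_{\max}(\bm{A})$, attained at $R$ times a unit leading eigenvector. This is computable in polynomial time to any prescribed accuracy by standard symmetric eigenvalue methods, or equivalently as a trust-region/SDP problem.

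The only delicate points are the exact characterization of equality in $\|\vx\|_2 \le \|\vx\|_1$ for the case $R=1$, and making sure the boundary $R = 1/\sqrt{n}$ lands in the eigenvalue case via the non-strict bound. Neither is a real obstacle. Combining the three cases shows that outside $1/\sqrt{n} < R < 1$ the problem is either infeasible or polynomially solvable, which is the statement.
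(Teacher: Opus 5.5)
Your proposal is correct and follows the paper's proof essentially step for step: the same three-way split into $R>1$ (infeasible), $R=1$ (feasible set $\{\pm\bm{e}_i\}$, answer $\max_i A_{ii}$), and $R\le 1/\sqrt{n}$ (the $L_1$ constraint is redundant, so the answer is the largest eigenvalue), using the same inequalities $\|\vx\|_2 \le \|\vx\|_1 \le \sqrt{n}\|\vx\|_2$. Your extra care only tightens details the paper leaves implicit: you derive the $R=1$ feasible set from the vanishing cross terms, you handle the boundary $R=1/\sqrt{n}$ with the non-strict bound (the paper's ``strictly inside'' is slightly loose there), and you note that the eigenvalue is computed to prescribed accuracy.
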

\begin{proof}
    For any vector $\vx \in \mathbb{R}^n$, standard norm inequalities dictate that $\|\vx\|_2 \le \|\vx\|_1 \le \sqrt{n}\|\vx\|_2$. Applying the constraints of (L1-L2-QM), we have $R \le \|\vx\|_1 \le \sqrt{n}R$.

    If $R > 1$, then $\|\vx\|_1 \ge R > 1$, which strictly violates the constraint $\|\vx\|_1 \le 1$. Hence the feasible region is empty, and the problem is trivially solved.

    If $R = 1$, the constraints $\|\vx\|_2 = 1$ and $\|\vx\|_1 \le 1$ can only be simultaneously satisfied by $1$-sparse vectors, namely the standard basis vectors $\pm \bm{e}_i$. The problem then reduces to finding the maximum diagonal entry of $\bm{A}$, which requires $O(n)$ time.

    If $R \le 1/\sqrt{n}$, then the maximum possible $L_1$ norm for any point on the sphere $\|\vx\|_2 = R$ is $\sqrt{n}R \le 1$. Consequently, the entire $L_2$ sphere lies strictly inside the $L_1$ ball, rendering the constraint $\|\vx\|_1 \le 1$ redundant. The problem degenerates to $\max_{\|\vx\|_2 = R} \vx^\top \bm{A} \vx$, which is equivalent to computing the largest eigenvalue of $\bm{A}$ and can be solved in polynomial time.
\end{proof}

While ($L_1$-$L_2$-QM) is tractable for all $R\not\in(1/\sqrt{n}, 1)$, we show that within the active geometric intersection $1/\sqrt{n}<R<1$, it is fundamentally NP-hard.

\begin{theorem}
\label{thm:l1l2_nphard}
The problem ($L_1$-$L_2$-QM) is NP-hard.
\end{theorem}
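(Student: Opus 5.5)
The plan is to reduce the decision version of the maximum clique problem, ``does $G$ contain a clique of size $k$?'', to a single instance of ($L_1$-$L_2$-QM). The reduction combines Lemma~\ref{theorem:Motzkin-Straus} with the Cauchy--Schwarz heuristic \eqref{CS} behind the relaxation. Given a graph $G=(V,E)$ on $n$ vertices and an integer $k$ with $2\le k\le n-1$ (this is still NP-complete), I take $\bm{A}$ to be the adjacency matrix of $G$ and set $R=1/\sqrt{k}$. This $R$ lies in the nontrivial range $1/\sqrt{n}<R<1$ of Lemma~\ref{lem:range_R}, and the instance has polynomial size. I claim that the optimal value of ($L_1$-$L_2$-QM) is at least $1-1/k$ if and only if $\omega(G)\ge k$. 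An exact solver for ($L_1$-$L_2$-QM) would therefore decide clique.

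For the upper bound, let $\vx$ be any feasible point and write $|\vx|$ for its componentwise absolute value. Since $\bm{A}$ is entrywise nonnegative, $\vx^T\bm{A}\vx\le |\vx|^T\bm{A}|\vx|$. If $\vx\neq 0$ (always true, since $R>0$), set $\bm{z}=|\vx|/\|\vx\|_1\in\Delta$. Homogeneity and Lemma~\ref{theorem:Motzkin-Straus} then give
\[
\vx^T\bm{A}\vx\le \|\vx\|_1^2\,\bm{z}^T\bm{A}\bm{z}\le 1-\frac{1}{\omega(G)},
\]
using $\|\vx\|_1\le 1$. So if $\omega(G)\le k-1$, every feasible objective value is at most $1-1/(k-1)<1-1/k$. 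This step uses only the $L_1$ constraint; the $L_2$ equality plays no role in it.

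For the lower bound, suppose $G$ has a clique $C$ with $|C|=k$. Let $\vx$ be $1/k$ on $C$ and $0$ elsewhere. Then $\|\vx\|_1=1$ and $\|\vx\|_2=\sqrt{k\cdot k^{-2}}=1/\sqrt{k}=R$, so $\vx$ is feasible. Its objective value is $\sum_{i\neq j\in C}k^{-2}=k(k-1)/k^2=1-1/k$. This is exactly the point where the choice $R=1/\sqrt{k}$ matters: the uniform vector on a $k$-clique meets the $L_2$ equality exactly.

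The step I expect to need the most care is making sure the $L_2$ equality constraint does not spoil either direction. The upper bound discards it, which is harmless because it only shrinks the feasible set. The lower bound needs a witness lying exactly on the sphere, and the uniform clique vector is such a witness. I would also note that the threshold gap between $1-1/(k-1)$ and $1-1/k$ is $1/(k(k-1))$, an inverse polynomial with polynomially bounded encoding. Hence even computing the optimal value to within accuracy $O(1/n^2)$ suffices, which gives NP-hardness in the usual sense of Theorem~\ref{thm:l1l2_nphard}.
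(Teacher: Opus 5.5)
Your proposal is correct and uses essentially the same reduction as the paper. It takes the adjacency matrix, sets $R=1/\sqrt{k}$, uses the uniform vector on a $k$-clique as the feasible witness, and applies Motzkin--Straus to $\vx/\|\vx\|_1$ to get $\vx^T\bm{A}\vx\le 1-1/(k-1)<1-1/k$ when $\omega(G)\le k-1$; your $|\vx|$ device is equivalent to the paper's restriction to $x_i\ge 0$. The only difference is that the paper also proves the universal bound $\vx^T\bm{A}\vx\le\|\vx\|_1^2-\|\vx\|_2^2\le 1-R^2$, so the optimal value is exactly $1-R^2$ on yes-instances, whereas your threshold test ``value $\ge 1-1/k$'' does not need this bound (it is how the paper argues its corollary for the inequality version).
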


\begin{proof}
We reduce from the $k$-clique problem. Given an undirected graph $G=(V,E)$, deciding whether $G$ contains a clique of size $k$ is NP-complete \cite{karp2009reducibility}. Let $\bm{A}$ be the adjacency matrix of $G$ and $k$ be an integer with $1<k<n$. Set $R = 1/\sqrt{k}$, which satisfies $1/\sqrt{n} < R < 1$.

Since $\bm{A}$ is non-negative, we may restrict to $x_i\ge0$ without loss of generality, giving the equivalent formulation
\[
(L_1\text{-}L_2\text{-QM}^+)\ \max_{\vx\in\mathbb{R}^n}\ \vx^\top\bm{A}\vx\quad\text{s.t.}\quad \|\vx\|_1\le1,\ x_i\ge0 ~\forall i,\ \|\vx\|_2=R.
\]
For any feasible $\vx$, expanding $\|\vx\|_1^2$ gives
\[
\|\vx\|_1^2 = \Bigl(\sum_{i=1}^n x_i\Bigr)^2 = \sum_{i=1}^n x_i^2 + \sum_{i\neq j} x_i x_j = \|\vx\|_2^2 + \sum_{i\neq j} x_i x_j.
\]
Since $\bm{A}$ is a loop-less adjacency matrix,
\[
\vx^\top\bm{A}\vx = \sum_{i\neq j}A_{ij} x_i x_j \le \sum_{i\neq j} x_i x_j.
\]
Combining the two relations, for any feasible solution of ($L_1$-$L_2$-QM$^+$), we have
\[
\vx^\top\bm{A}\vx \le \|\vx\|_1^2 - \|\vx\|_2^2 \le 1 - R^2.
\]
Let $v(\cdot)$ denote the optimal value of a problem.
We will prove that $v(L_1\text{-}L_2\text{-QM}^+)= 1-R^2$ if and only if $G$ has a clique of size $k$.

\textbf{Sufficiency:}
Suppose $G$ contains a clique $C$ of size $k$. Consider the uniform vector $\vx^*$ defined by $x_i^* = 1/k$ for $i \in C$ and $x_i^* = 0$ otherwise. Then $\|\vx^*\|_1 = 1$ and $\|\vx^*\|_2 = 1/\sqrt{k} = R$, so $\vx^*$ is feasible for ($L_1$-$L_2$-QM$^+$). Consequently,
\[
{\vx^*}^\top \bm{A} \vx^* = \sum_{i,j \in C,~ i \neq j} \frac{1}{k^2} = \frac{k(k-1)}{k^2} = 1 - \frac{1}{k} = 1 - R^2.
\]
Thus, the upper bound $1 - R^2$ is attained and hence $v(L_1\text{-}L_2\text{-QM}^+) = 1 - R^2$.

\textbf{Necessity:}
Suppose, to the contrary, that $G$ contains no $k$-clique, i.e., $\omega(G) \le k-1$. We will show that $v(L_1\text{-}L_2\text{-QM}^+) < 1 - R^2$.

For any feasible $\vx$ of ($L_1$-$L_2$-QM$^+$), we have $\vx / \|\vx\|_1 \in \Delta$, the standard simplex. By the classical Motzkin–Straus theorem (Lemma \ref{theorem:Motzkin-Straus}),
\[
\left(\frac{\vx}{\|\vx\|_1}\right)^\top \bm{A} \left(\frac{\vx}{\|\vx\|_1}\right) \le 1 - \frac{1}{\omega(G)}.
\]
Multiplying both sides by $\|\vx\|_1^2$ and using $\|\vx\|_1 \le 1$, we obtain
\[
\vx^\top \bm{A} \vx \le \left(1 - \frac{1}{\omega(G)}\right) \|\vx\|_1^2 \le 1 - \frac{1}{\omega(G)} \le 1 - \frac{1}{k-1} < 1 - R^2.
\]
Hence, $v(L_1\text{-}L_2\text{-QM}^+) < 1 - R^2$.
 
We have shown that, given a graph $G$ and an integer $k$ with $1<k<n$, one can choose $R = 1/\sqrt{k}$  and construct the corresponding instance of ($L_1$-$L_2$-QM$^+$). Solving this instance yields its optimal value $v$. If $v = 1 - R^2$, then $G$ contains a $k$-clique; otherwise, $G$ contains no $k$-clique. Since the $k$-clique problem is NP-complete, ($L_1$-$L_2$-QM$^+$) and consequently ($L_1$-$L_2$-QM) are NP-hard.
\end{proof}

Similar to Lemma \ref{lem:range_R}, one can show that the problem ($L_1$-$L_2$-QM-Ineq) is solvable in polynomial time when $R \le 1/\sqrt{n}$. For $R \ge 1$, it reduces to (QPL1), which is NP-hard \cite{hsia2014complexity}. The remaining regime $1/\sqrt{n} < R < 1$ is the focus of this paper.

\begin{corollary}
\label{cor:inequality}
For $1/\sqrt{n} < R < 1$,  the problem ($L_1$-$L_2$-QM-Ineq) is NP-hard.
\end{corollary}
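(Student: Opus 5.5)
Because $\bm{A}$ is the (non-negative) adjacency matrix of a graph, the maximizer of ($L_1$-$L_2$-QM-Ineq) can be pushed to the boundary. I therefore plan to reuse the reduction in the proof of Theorem~\ref{thm:l1l2_nphard} with the same instance: given $G$ and $k$ with $1<k<n$, take the adjacency matrix $\bm{A}$ and set $R=1/\sqrt{k}\in(1/\sqrt{n},1)$. As before, we may restrict to $x_i\ge 0$. The claim to prove is
\[
v(L_1\text{-}L_2\text{-QM-Ineq}) = 1-\frac{1}{k}
\]
if and only if $G$ has a $k$-clique.

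\textbf{Sufficiency.} If $G$ has a $k$-clique, the vector $\vx^*$ from the proof of Theorem~\ref{thm:l1l2_nphard} is feasible, since it satisfies $\|\vx^*\|_2=R\le R$. It attains the value $1-1/k$.

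\textbf{Upper bound.} For any feasible $\vx\ge 0$, Motzkin--Straus (Lemma~\ref{theorem:Motzkin-Straus}) applied to $\vx/\|\vx\|_1$ gives
\[
\vx^\top\bm{A}\vx\le\Bigl(1-\tfrac{1}{\omega(G)}\Bigr)\|\vx\|_1^2\le 1-\tfrac{1}{\omega(G)}.
\]
This uses only $\|\vx\|_1\le1$ and ignores the $L_2$ constraint entirely. If $\omega(G)\le k-1$, the bound is at most $1-1/(k-1)<1-1/k$, which proves necessity.

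The remaining point is that when $\omega(G)\ge k$ the optimal value might exceed $1-1/k$, since a larger clique gives a larger Motzkin--Straus value. The test must therefore be ``$v\ge 1-1/k$'' rather than equality. For that test I need the upper bound $\vx^\top\bm{A}\vx\le\|\vx\|_1^2-\|\vx\|_2^2$ derived in the proof of Theorem~\ref{thm:l1l2_nphard}. With only $\|\vx\|_2\le R$ this no longer gives $1-R^2$, which is the main obstacle. I avoid it by not needing an exact value: the decision rule is that $G$ has a $k$-clique if and only if $v\ge 1-1/k$. Sufficiency gives ``$\ge$'' and the Motzkin--Straus bound gives strict ``$<$'' otherwise, so the dichotomy holds. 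The same rule also works for the equality version of the problem.

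Finally, the reduction is polynomial, and the threshold $1-1/k$ is separated from $1-1/(k-1)$ by a gap of $1/(k(k-1))$, which has polynomial bit size. Hence computing $v$, even to that accuracy, decides $k$-clique, and ($L_1$-$L_2$-QM-Ineq) is NP-hard for $R=1/\sqrt{k}$. To cover an arbitrary $R$ in $(1/\sqrt{n},1)$, one can pad the graph with isolated vertices to adjust $n$ while keeping $k$ fixed; I would treat the statement as asserting hardness for instances in this regime, as in Theorem~\ref{thm:l1l2_nphard}.
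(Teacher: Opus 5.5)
Your proof is correct and is essentially the paper's argument. It uses the same instance with $R=1/\sqrt{k}$, the clique vector $\vx^*$ to get $v\ge 1-R^2$, the Motzkin--Straus bound (which ignores the $L_2$ constraint) to get $v\le 1-1/(k-1)<1-R^2$ when $\omega(G)\le k-1$, and the threshold test $v\ge 1-R^2$ rather than equality. Your opening equality claim is indeed false here, since a clique of size $m>k$ gives the feasible value $1-1/m$, as you then note yourself. Padding with isolated vertices does not reach values of $R$ other than $1/\sqrt{k}$, but the paper's proof likewise only establishes hardness for $R=1/\sqrt{k}$.
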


\begin{proof}
We use the same reduction as in Theorem \ref{thm:l1l2_nphard}. Given a graph $G$ and an integer $k$, set $R = 1/\sqrt{k}$ and construct the instance with adjacency matrix $\bm{A}$.

If $G$ contains a $k$-clique, the uniform vector $\vx^*$ (with $x_i^* = 1/k$ on the clique) satisfies $\|\vx^*\|_1 = 1$ and $\|\vx^*\|_2 = R$, hence is feasible for ($L_1$-$L_2$-QM-Ineq), achieving objective value $1-R^2$. Thus $v(L_1\text{-}L_2\text{-QM-Ineq}) \ge 1-R^2$.

If $G$ contains no $k$-clique, the same argument as in Theorem \ref{thm:l1l2_nphard} shows that every feasible solution satisfies $\vx^\top \bm{A} \vx < 1 - R^2$, so $v(L_1\text{-}L_2\text{-QM-Ineq}) < 1-R^2$.

Therefore, $v(L_1\text{-}L_2\text{-QM-Ineq}) \ge 1-R^2$ if and only if $G$ contains a $k$-clique. Solving ($L_1$-$L_2$-QM-Ineq) would decide the $k$-clique problem, and the problem is NP-hard.
\end{proof}

\section*{Funding}
This work was supported by  the National Natural Science Foundation of China (No.
12171021).
\section*{Data Availability}
No datasets were generated or analysed during the current study.
\section*{Declarations}
Ethical Approval Not applicable.
Competing interests The authors declare no competing interests.

\bibliographystyle{unsrt}
\bibliography{refs}

\end{document}